\newtheorem{proposition}{Proposition}[section]
\title{Online learning of quadratic manifolds from streaming data for nonlinear dimensionality reduction and nonlinear model reduction}
\author{
    Paul Schwerdtner\thanks{Courant Institute of Mathematical Sciences, New York University, New York (\texttt{paul.schwerdtner@nyu.edu})} \and
    Prakash Mohan\thanks{Computational Science Center, National Renewable Energy Laboratory, Golden, Colorado} \and
    Aleksandra Pachalieva\thanks{Los Alamos National Laboratory, Los Alamos, New Mexico} \and
    Julie Bessac\footnotemark[2] \and
    Daniel O'Malley\footnotemark[3] \and
    Benjamin Peherstorfer\footnotemark[1]
}
\date{September 2024} %
\begin{document}

\maketitle
\begin{abstract}
This work introduces an online greedy method for constructing quadratic manifolds from streaming data, designed to enable in-situ analysis of numerical simulation data on the Petabyte scale. Unlike traditional batch methods, which require all data to be available upfront and take multiple passes over the data, the proposed online greedy method incrementally updates quadratic manifolds in one pass as data points are received, eliminating the need for expensive disk input/output operations as well as storing and loading data points once they have been processed. A range of numerical examples demonstrate that the online greedy method learns accurate quadratic manifold embeddings while being capable of processing data that far exceed common disk input/output capabilities and volumes as well as main-memory sizes.
\end{abstract}
\noindent \textbf{Keywords:} model reduction, surrogate modeling, quadratic manifolds, closure modeling

\section{Introduction}
Dimensionality reduction is an important building block in computational science and engineering, including model reduction \cite{RozzaHP2008Reduced,BennerGW2015survey,AntoulasBG2020Interpolatory,LU2020109229,Peherstorfer2022Breaking,KramerPW2024Learning,farcas2024distributedcomputingphysicsbaseddatadriven} and closure modeling \cite{Sagaut2006Large,WangABI2012Proper,DuraisamyIX2019Turbulence,CoupletSB2003Intermodal,XieMRI2018Data-Driven,GouasmiPD2017priori,ZannaB2020Data-Driven,PanD2018Data-Driven}. 
We focus on nonlinear dimensionality reduction with quadratic manifolds, which can be interpreted as nonlinearly correcting  linear approximations obtained with principal components \cite{GeelenWW2023Operator,BarnettF2022Quadratic,CohenFSM2023Nonlinear,SchwerdtnerP2024Greedy}.

 The motivation for focusing on quadratic approximations is that quadratic terms arise in the modeling of many physics phenomena \cite{JainTRR2017quadratic,RutzmoserRTJ2017Generalization,doi:10.1137/16M1098280,PEHERSTORFER2016196,QIAN2020132401,doi:10.1137/14097255X,5991229,Schlegel_Noack_2015} and also have been shown to achieve accurate approximations of latent dynamics in model reduction \cite{GeelenBW2023Learning, BarnettF2022Quadratic,goyal2024generalized,https://doi.org/10.1002/pamm.202200049,SHARMA2023116402,yildiz2024datadriven, QiaoZWZ2013explicit,PhysRevFluids.6.094401,SAWANT2023115836,goyal2024guaranteed}.

Several methods for learning quadratic manifolds from data have been introduced \cite{GeelenWW2023Operator,BarnettF2022Quadratic,GeelenBW2023Learning,GeelenBWW2023Learning,SchwerdtnerP2024Greedy}. 
All of these methods operate in the batch setting, which means that all data points are available at the beginning of the computational process and the algorithms make multiple passes over the data points. 
However, in many science and engineering applications, loading all data points into the main memory, or even storing them on disk, is intractable. 
In this work, we introduce an online learning method that incrementally constructs quadratic manifolds from streaming data in one pass, which means that data points are processed immediately as they are received one by one or in small chunks. This avoids multiple passes over the data and having to store the whole data set to disk or even loading all data points into the main memory at once. 
We first demonstrate that the online learning method constructs quadratic manifolds that can accurately approximate the data. 
We then leverage the online capabilities of our approach  to use it as an in-situ data analysis approach that processes data as they are generated from numerical simulations without having to store data on disk. The online learning capabilities  allows us to train quadratic manifolds on Petabyte-scale data, which is impractical with batch methods due to memory and storage limitations as well as high costs of disk input/output operations.

We briefly review the literature on constructing quadratic manifolds. We distinguish between three different batch methods for constructing quadratic manifolds from data. First, there is the approach introduced in \cite{GeelenWW2023Operator,BarnettF2022Quadratic} that computes the singular value decomposition (SVD) of the data matrix and subsequently fits a weight matrix to the difference between the data points and the reduced data points in the subspace spanned by the leading left-singular values. As recognized in \cite{GeelenBW2023Learning,GeelenBWW2023Learning,SchwerdtnerP2024Greedy}, relying on the leading left-singular vectors alone can lead to poor approximation capabilities of the learned quadratic manifolds. 
The second approach for constructing quadratic manifolds is via an alternating-minimization problem introduced in \cite{GeelenBW2023Learning,GeelenBWW2023Learning}. The approach solves a non-convex optimization problems, which leads to quadratic manifolds that can more accurately approximate the data points but at the same time the computational costs of the training process can be high due to the non-convex optimization problem. 
The third approach is the method introduced in \cite{SchwerdtnerP2024Greedy}, which greedily constructs quadratic manifolds. It extends the approach based on the SVD and the linear least-squares problem from \cite{GeelenWW2023Operator,BarnettF2022Quadratic} to compute multiple linear least-squares problems to better fit the quadratic manifolds. 
All of the three approaches, however, assume that all data points are available at the beginning of the process and that the algorithms can make multiple passes over the data points. 

Besides batch methods for constructing quadratic manifolds, an important building block for us will be methods  for incrementally constructing the SVD   \cite{BakerGV2012Low-rank,ChahlaouiGV-D2003Recursive,Baker2004Block,BRAND200620,10.1007/3-540-47969-4_47,58320,LI20041509} and dominated subspaces \cite{5706976,8417980,doi:10.1137/140989169} from streaming data. There also are randomized methods \cite{10.1145/1536414.1536445,14f2151b-0085-3da5-9335-b67df4fe1651,doi:10.1137/090771806} that can learn principal components \cite{pmlr-v49-jain16,pmlr-v134-huang21a} from streaming data. 
Incrementally constructing the SVD  has been a key building block for model reduction from streaming data \cite{PEHERSTORFER201521,Peherstorfer2016} and streaming dynamic mode decomposition \cite{10.1063/1.4901016,doi:10.1137/18M1192329,https://doi.org/10.1002/we.2694}. 

In this work, we introduce an online greedy method for constructing quadratic manifolds that builds on \cite{SchwerdtnerP2024Greedy} but is applicable when data points are streamed and have to be processed in one pass without storing them for subsequent post-processing. 
We first show that the batch greedy method from \cite{SchwerdtnerP2024Greedy} can act directly on the left- and right-singular vectors as well as the singular values of the data, without having to explicitly assemble the data matrix. We then argue that the greedy method remains valid even on a truncated SVD of the data matrix, which we then propose to compute incrementally from streaming data. For the incremental SVD, we build on the algorithm introduced in \cite{BakerGV2012Low-rank}. 
Numerical experiments with data from wave propagation problems and computational fluid dynamics demonstrate that the online greedy method can handle Petabyte-scale data, can be used as in-situ data analysis tool within numerical simulations, and constructs quadratic manifolds that accurately approximate simulation data.

This manuscript is organized as follows. We provide preliminaries and a problem formulation in Section~\ref{sec:Prelim}. The online greedy method is introduced in Section~\ref{sec:Greedy}, together with an algorithmic description. Numerical experiments are shown in Section~\ref{sec:numerical_experiments}. We draw conclusions in Section~\ref{sec:Conc}.

\section{Preliminaries}\label{sec:Prelim}

\subsection{Linear dimensionality reduction with encoder and decoder functions}\label{sec:Prelim:EnDeLinear}
Dimensionality reduction typically can be formulated via an encoder $f: \mathbb{R}^{\nfull} \to \mathbb{R}^{\nred}$ that maps a high-dimensional data point $\fullstate \in \mathbb{R}^{\nfull}$ of dimension $\nfull$ to a reduced point $\redstate$ of low dimension $\nred \ll \nfull$, and a decoder $g: \mathbb{R}^{\nred} \to \mathbb{R}^{\nfull}$ that lifts a reduced data point $\redstate \in \mathbb{R}^{\nred}$ back into the high-dimensional space $\mathbb{R}^{\nfull}$.
The singular value decomposition (SVD) provides a classical approach to dimensionality reduction with linear encoder and decoder functions. Let $\fullstatei{1}, \dots, \fullstatei{\nsnapshots} \in \mathbb{R}^{\nfull}$ be data samples that are collected into the data matrix $\snapshots = [\fullstatei{1}, \dots, \fullstatei{\nsnapshots}]$. The SVD of $\snapshots$ gives the decomposition $\snapshots = \USVD\SigmaSVD\VSVD^{\top}$, with the orthonormal matrices $\USVD \in \mathbb{R}^{\nfull \times \truerank}, \VSVD \in \mathbb{R}^{\nsnapshots \times \truerank}$ and diagonal matrix $\SigmaSVD \in \mathbb{R}^{\truerank \times \truerank}$, with the rank $\truerank$ of the matrix $\snapshots$. The singular values, i.e., the diagonal elements of $\SigmaSVD$, are denoted as $\SigmaSVDi{1} \ge \SigmaSVDi{2} \ge \dots \ge \SigmaSVDi{\truerank} \ge 0$. 
Selecting the first $\nred$ left-singular vectors $\USVDi{1}, \dots, \USVDi{\nred}$ of $\snapshots$ into the basis matrix $\Vlin = [\USVDi{1}, \dots, \USVDi{\nred}] \in \mathbb{R}^{\nfull \times \nred}$ leads to the linear encoder function $f_{\Vlin}(\bfs) = \Vlin^{\top}\bfs$ and linear decoder function $g_{\Vlin}(\redstate) = \Vlin\redstate$. The subspace that is spanned by the columns of $\Vlin$ is denoted as $\Vcal \subset \mathbb{R}^{\nfull}$.
The encoding and subsequent decoding $g_{\Vlin} \circ f_{\Vlin}$ of a data points $\fullstate$ corresponds to the orthogonal projection of $\fullstate$ 
with respect to the Euclidean inner product onto the subspace $\Vcal$. We denote the projection onto $\Vcal$ as $\bfP_{\Vcal}: \R^{\nfull} \to \Vcal, \fullstate \mapsto \Vlin\Vlin^{\top}\fullstate$.

\subsection{Dimensionality reduction with quadratic manifolds }
For nonlinear dimensionality reduction, the linear decoder function $g_{\Vlin}$ can be augmented with a nonlinear correction term as
\begin{equation}\label{eq:Prelim:DecoderNonlinear}
g_{\Vlin, \Vnonlin}(\redstate) = \Vlin\redstate + \Vnonlin h(\redstate)\,,
\end{equation}
where $h: \mathbb{R}^{\nred} \to \mathbb{R}^{\nredmod}$ is a nonlinear feature map that provides $\nredmod \gg \nred$ features for a reduced data point $\redstate$ and $\Vnonlin \in \mathbb{R}^{\nfull \times \nredmod}$ is a weight matrix; see \cite{JainTRR2017quadratic,RutzmoserRTJ2017Generalization,GeelenWW2023Operator,BarnettF2022Quadratic,GeelenBWW2023Learning,GeelenBW2023Learning,SchwerdtnerP2024Greedy}.
Given a basis matrix $\Vlin$ of a subspace $\Vcal$, a weight matrix $\Vnonlin$, and a feature map $h$, the linear encoder $f_{\Vlin}$ and the nonlinear decoder $g_{\Vlin, \Vnonlin}$ induce a manifold
\[
\mathcal{M}_{\nred}(\Vlin, \Vnonlin, h) = \{g_{\Vlin, \Vnonlin}(\redstate) \,|\, \redstate \in \mathbb{R}^{\nred}\} \subset \mathbb{R}^{\nfull}\,.
\]
Because $h$ is nonlinear, the manifold $\mathcal{M}_{\nred}$ can include points in $\mathbb{R}^{\nfull}$ that are outside of the subspace $\Vcal$. 
In particular, if $h$ is a quadratic polynomial 
\begin{equation}
\label{eq:condensed_kronecker}
      \kronfeaturemap: \R^{\nred} \to \R^{\nred(\nred+1)/2},  \redvec \mapsto \begin{bmatrix}
        \redveci{1} \redveci{1} & \redveci{1} \redveci{2} & \dots & \redveci{1} \redveci{\nred} & \redveci{2} \redveci{2} &\dots& \redveci{\nred} \redveci{\nred}
      \end{bmatrix}^\top\,,
    \end{equation}
then we call the corresponding manifold $\mathcal{M}_{\nred}$ a quadratic manifold. Quadratic manifolds are widely used; see, e.g., \cite{JainTRR2017quadratic,RutzmoserRTJ2017Generalization,GeelenWW2023Operator,BarnettF2022Quadratic,GeelenBWW2023Learning,GeelenBW2023Learning,SchwerdtnerP2024Greedy}.

References \cite{GeelenWW2023Operator,BarnettF2022Quadratic} propose setting the matrix $\Vlin$ to have the leading $\nred$ left-singular vectors $\USVDi{1}, \dots, \USVDi{\nred}$ as columns and constructing $\Vnonlin$ via a linear regression problem as
\begin{equation}\label{eq:Prelim:LinearLSQApproach}
\min_{\Vnonlin \in \mathbb{R}^{\nfull \times \nredmod}} \|\bfP_{\Vcal}\snapshots - \snapshots + \Vnonlin h(f_{\Vlin}(\snapshots))\|_F^2 + \gamma \|\Vnonlin\|_F^2 \,,
\end{equation}
where $\|\cdot\|_F$ denotes the Frobenius norm. We overload the notation of $h$ in \eqref{eq:Prelim:LinearLSQApproach} to allow $h$ to be evaluated column-wise on the matrix $f_{\Vlin}(\snapshots) = \Vlin^{\top}\snapshots$ to obtain $h(f_{\Vlin}(\snapshots)) \in \mathbb{R}^{\nredmod \times \nsnapshots}$. The regularization term is controlled by $\gamma > 0$ and can prevent overfitting of the weight matrix $\Vnonlin$ to data in $\snapshots$.

\subsection{Greedy construction of quadratic manifolds from data}\label{sec:Prelim:Greedy}
Selecting the $\nred$ leading left-singular vectors $\USVDi{1}, \dots, \USVDi{\nred}$ as columns of $\Vlin$ that span $\Vcal$ can lead to poor results when constructing quadratic manifolds with the optimization problem \eqref{eq:Prelim:LinearLSQApproach}:  Notice that the nonlinear correction term in the decoder \eqref{eq:Prelim:DecoderNonlinear} acts on the projection of the data point onto the first leading left-singular vectors that can miss information that are necessary for the quadratic term to be effective, which is discussed in detail in \cite{SchwerdtnerP2024Greedy}. This is why the work \cite{SchwerdtnerP2024Greedy} proposes a greedy method for selecting $\nred$ left-singular vectors $\USVDi{j_1}, \dots, \USVDi{j_{\nred}}$ with indices $j_1, j_2, \dots, j_{\nred} \in \{1, \dots, \nconsider\}$ from the first $\nconsider \gg \nred$ left-singular vectors $\USVDi{1}, \dots, \USVDi{\nconsider}$. Importantly, the selected left-singular vectors $\USVDi{j_1}, \dots, \USVDi{j_{\nred}}$ with indices $j_1, \dots, j_{\nred}$ are not necessarily the leading $\nred$ left-singular vectors with indices $1, \dots, \nred$.

At each greedy iteration $i = 1, \dots, \nred$, the index $j_i$ is determined as a minimizer of
\begin{equation}\label{eq:Greedy:GreedyOptiProblem}
    \min_{j_i = 1, \dots, m} \min_{\Vnonlin \in \mathbb{R}^{\nfull \times \nredmod}} J(\USVDi{j_i}, \Vlin_{i-1}, \Vnonlin)\,.
    \end{equation}
The matrix $\Vlin_{i-1}=[\USVDi{j_1},\dots,\USVDi{j_{i-1}}]$ contains as columns the left-singular vectors $\USVDi{j_1},\dots,\USVDi{j_{i-1}}$ with indices $j_1, \dots, j_{i - 1}$ that were selected in the previous greedy iterations $1, \dots, i - 1$.
The objective function is
\begin{equation}\label{eq:Greedy:J}
J(\bfv, \Vlin, \Vnonlin) = \|\bfP_{\Vcal \oplus \operatorname{span}\{\bfv\}}\snapshots - \snapshots + \Vnonlin h(f_{[\Vlin, \bfv]}(\snapshots))\|_F^2 + \gamma \|\Vnonlin\|_F^2\,,
\end{equation}
where $\bfP_{\Vcal \oplus \operatorname{span}\{\bfv\}}$ denotes the orthogonal projection operator onto the subspace $\Vcal \oplus \operatorname{span}(\bfv)$ spanned by the columns of $\Vlin$ and the vector $\bfv$. After $\nred$ iterations, the indices $j_1, \dots, j_{\nred}$ are selected and the matrix $\Vlin = [\USVDi{j_1}, \dots, \USVDi{j_{\nred}}]$ is assembled. The weight matrix $\Vnonlin$ is then fitted via~\eqref{eq:Prelim:LinearLSQApproach}.

\subsection{Problem formulation}\label{sec:Prelim:ProbForm}
Current methods \cite{GeelenWW2023Operator,BarnettF2022Quadratic,SchwerdtnerP2024Greedy} for constructing quadratic manifolds operate under the assumption that the data points $\fullstatei{1}, \dots, \fullstatei{\nsnapshots}$ are available all at once at the beginning of the computations, which is typically referred to as batch setting. Furthermore, the algorithms are allowed to make multiple passes over the data samples $\fullstatei{1}, \dots, \fullstatei{\nsnapshots}$. %
In contrast, we are interested in online learning of quadratic manifolds, which means that data points are streamed and have to be processed in one pass as they are received. 

Formally, our setup is that at each iteration $k = 0, 1, 2, 3, \dots$, we have access to a chunk $\chunk{k} = [\fullstatei{k\chunkdim+1}, \dots, \fullstatei{(k+1)\chunkdim}]$ of $\chunkdim \ll \nsnapshots$ data samples.  %
In particular, we have access to $\chunk{k}$ only at iteration $k$ and thus the chunk $\chunk{k}$ has to be processed immediately because it cannot be stored and processed at later iterations.

\section{Online learning of quadratic manifolds}\label{sec:Greedy}
We introduce an online greedy approach that constructs quadratic manifolds from streaming data. The key insight is that all steps of the batch greedy method presented in \cite{SchwerdtnerP2024Greedy} can be reformulated to directly operate on the factors of the SVD of the data matrix rather than on the data points themselves. Thus, it is sufficient to  have access to the SVD of the streamed data samples, instead of requiring direct access to the data samples. Building on this insight, we incrementally update a truncated SVD of the data matrix, to which we then apply the reformulated greedy method  to construct a quadratic manifold.

\subsection{Greedy construction from the SVD of the data matrix}
Recall that $\snapshots = \USVD \SigmaSVD \VSVD^\top$ is the SVD of the data matrix $\snapshots$. 
We now show that the greedy iterations described in Section~\ref{sec:Prelim:Greedy} can be performed with the SVD of $\snapshots$ given by the matrices $\USVD, \SigmaSVD, \VSVD$, without having to assemble the data matrix $\snapshots$.

At greedy iteration $i = 1, \dots, \nred$, we define the index sets $\indexin=\{j_1, \dots, j_i\}$ and $\indexout=\{1, \dots, \min(\nfull, \nsnapshots)\} \setminus\indexin$, where $j_1, \dots, j_i$ are the indices of the left-singular vectors that have been selected by the greedy algorithm up to iteration $i$. 
Based on the index sets $\indexin$ and $\indexout$, we permute the SVD as 
    \begin{align}
      \label{eq:OurMethod:PermSVD}
      \snapshots = \USVD \SigmaSVD \VSVD^\top =
      \begin{bmatrix}
        \USVD_{\indexin} & \USVD_{\indexout}
      \end{bmatrix}
      \begin{bmatrix}
        \SigmaSVD_{\indexin} & 0 \\
        0 & \SigmaSVD_{\indexout}
      \end{bmatrix}
      \begin{bmatrix}
        \VSVD_{\indexin} & \VSVD_{\indexout}
      \end{bmatrix}^\top,
    \end{align}
where $\USVD_{\indexin}=[\USVDi{j_1}, \dots, \USVDi{j_{i}}]$, $\SigmaSVD_{\indexin}$ contains $\SigmaSVDi{j_1}, \dots, \SigmaSVDi{j_{i}}$ on its diagonal, and $\VSVD_{\indexin} = [\VSVDi{j_1}, \dots, \VSVDi{j_i}]$. Analogously, the matrices $\USVD_{\indexout}, \VSVD_{\indexout}$, and $\SigmaSVD_{\indexout}$ contain the  singular vectors and singular values with indices in $\indexout$.
Let now $\Vlin_{\indexin} = [\USVDi{j_1}, \dots, \USVDi{j_i}]$ be a basis matrix that spans the subspace $\Vcal_{\indexin} \subset \R^{\nfull}$.
We can then write the orthogonal projection  
 of $\snapshots$ onto $\Vcal_{\indexin}$ as
\[
\bfP_{\Vcal_{\indexin}}\snapshots = \USVD_{\indexin}\SigmaSVD_{\indexin}\VSVD_{\indexin}^\top\,.
\]
Furthermore, because the left-singular vectors are orthonormal, we obtain that the projection error of $\snapshots$ onto $\Vcal_{\indexin}$ is
    \begin{align}
      \bfP_{\Vcal_{\indexin}}\snapshots - \snapshots &= \USVD_{\indexout} \SigmaSVD_{\indexout} \VSVD_{\indexout}^\top\,.
    \end{align}
Finally, we can represent the reduced data points of $\snapshots$ given by the encoder $f_{\Vcal_{\indexin}}$ as $f_{\Vcal_{\indexin}}(\snapshots) = \SigmaSVD_{\indexin}\VSVD_{\indexin}$.

Equipped with these reformulations, we can state the the least-squares problem~\eqref{eq:Prelim:LinearLSQApproach} using only factors of the SVD of $\snapshots$ as 
    \begin{align}
        \label{eq:OurMethod:LstsqUsingSVD}
        \min_{\Vnonlin \in \mathbb{R}^{\nfull \times \nredmod}} \|\USVD_{\indexout} \SigmaSVD_{\indexout} \VSVD_{\indexout}^\top + \Vnonlin \featuremap(\SigmaSVD_{\indexin}\VSVD^\top_{\indexin})\|_F^2 + \gamma \|\Vnonlin\|_F^2 \,.
    \end{align}
Similarly, because the greedy iterations evaluate the objective function~\eqref{eq:Greedy:J} only at left-singular vectors, we can represent \eqref{eq:Greedy:J} as 
\begin{align}
\label{eq:OurMethod:ObjFunUsingSVD}
      J'(j, \Vlin_{\indexin}, \Vnonlin)=\|\USVD_{\indexout\setminus \{j\}} \SigmaSVD_{\indexout\setminus \{j\}} \VSVD_{\indexout\setminus \{j\}}^\top + \Vnonlin \featuremap(\SigmaSVD_{\indexin \cup \{j\}}\VSVD^\top_{\indexin \cup \{j\}})\|_F^2 + \gamma \|\Vnonlin\|_F^2 \,,
  \end{align}
where the first argument is now the index $j$ of the left-singular vector $\USVDi{j}$ instead of a generic vector $\bfv$ as in \eqref{eq:Greedy:J}. Notice that $\indexout\setminus \{j\}$ is the set $\indexout$ with the element $j$ removed. 
Because each greedy iteration just minimizes \eqref{eq:Greedy:J} over the left-singular vectors $\USVDi{1}, \dots, \USVDi{\nconsider}$ according to the optimization problem given in \eqref{eq:Greedy:GreedyOptiProblem}, the objective $J$ in \eqref{eq:Greedy:J} can be replaced with  the objective $J^{\prime}$ given in \eqref{eq:OurMethod:ObjFunUsingSVD} that uses the factors $\USVD, \SigmaSVD, \VSVD$ of the SVD of $\snapshots$ only.

\subsection{Greedy construction from a truncated SVD of the data matrix}
Let us now consider a truncated SVD of the data matrix. %
Recall the full SVD in \eqref{eq:OurMethod:PermSVD} that is permuted based on the index sets $\indexin$ and $\indexout$. 
We introduce a third index set that we denote as $\indextrunc$ and that we define as $\indextrunc=\{1, \dots, \nconsider\} \setminus \indexin$. The three index sets $\indexin, \indextrunc$, and $\indexout\setminus\indextrunc$ lead to the SVD of $\snapshots$ as
\begin{align}
      \label{eq:OurMethod:PermPermSVD}
      \snapshots = \USVD \SigmaSVD \VSVD^\top =
      \begin{bmatrix}
        \USVD_{\indexin} & \USVD_{\indextrunc} & \USVD_{\indexout\setminus\indextrunc}
      \end{bmatrix}
      \begin{bmatrix}
        \SigmaSVD_{\indexin} & 0 & 0\\
        0 & \SigmaSVD_{\indextrunc} & 0\\
        0 & 0 & \SigmaSVD_{\indexout\setminus\indextrunc}
      \end{bmatrix}
      \begin{bmatrix}
        \VSVD_{\indexin} & \VSVD_{\indextrunc} & \VSVD_{\indexout\setminus\indextrunc}
      \end{bmatrix}^\top.
    \end{align}
Leaving out the singular vectors and singular values with index in $\indexout\setminus\indextrunc$ leads to a truncated SVD 
\begin{equation}\label{eq:TruncatedSVD}
\USVD_{\nconsider} \SigmaSVD_{\nconsider} \VSVD^\top_{\nconsider} =
      \begin{bmatrix}
        \USVD_{\indexin} & \USVD_{\indextrunc} 
      \end{bmatrix}
      \begin{bmatrix}
        \SigmaSVD_{\indexin} & 0 \\
        0 & \SigmaSVD_{\indextrunc} 
      \end{bmatrix}
      \begin{bmatrix}
        \VSVD_{\indexin} & \VSVD_{\indextrunc} \end{bmatrix}^\top,
\end{equation}
which includes the singular vectors and singular values with index $1, \dots, \nconsider$ only that are in $\indexin \cup \indextrunc$. The truncated SVD \eqref{eq:TruncatedSVD} has at most rank $\nconsider$. We sometimes refer to $\nconsider$ as the truncation dimension.

Using only the truncated SVD in the least-squares problem  \eqref{eq:OurMethod:LstsqUsingSVD} based on the set $\indextrunc$ instead of $\indexout$ leads to 
\begin{align}
\label{eq:OurMethod:LstsqUsingTruncSVD}
    \min_{\Vnonlin \in \mathbb{R}^{\nfull \times \nredmod}} \|  \USVD_{\indextrunc} \SigmaSVD_{\indextrunc} \VSVD_{\indextrunc}^\top+ \Vnonlin \featuremap(\SigmaSVD_{\indexin}\VSVD^\top_{\indexin})\|_F^2 + \gamma \|\Vnonlin\|_F^2 \,.
\end{align}
By leaving out the difference
\begin{equation}\label{eq:OGreedyDiff}
\USVD_{\indexout\setminus\indextrunc} \SigmaSVD_{\indexout\setminus\indextrunc} \VSVD_{\indexout\setminus\indextrunc}^\top = \USVD_{\indexout} \SigmaSVD_{\indexout} \VSVD_{\indexout}^\top - \USVD_{\indextrunc} \SigmaSVD_{\indextrunc} \VSVD_{\indextrunc}^\top\,,
\end{equation}
from the objective of \eqref{eq:OurMethod:LstsqUsingSVD}, it is sufficient to have available the truncated SVD \eqref{eq:TruncatedSVD}
of $\snapshots$. 
Similarly, we can approximate the objective $J^{\prime}$ given in \eqref{eq:OurMethod:ObjFunUsingSVD} as
\begin{equation}\label{eq:OurMethod:ObjFunUsingTruncSVD}
\hat{J}'(j, \Vlin_{\indexin}, \Vnonlin)=\left\|\USVD_{\indextrunc\setminus \{j\}} \SigmaSVD_{\indextrunc\setminus \{j\}} \VSVD_{\indextrunc\setminus \{j\}}^\top + \Vnonlin \featuremap(\SigmaSVD_{\indexin \cup \{j\}}\VSVD^\top_{\indexin \cup \{j\}})\right\|_F^2 + \gamma \|\Vnonlin\|_F^2 \,,
\end{equation}
\newcommand{\featSin}{\featuremap(\SigmaSVD_{\indexin}\VSVD_{\indexin}^\top)}
which relies on the truncated SVD \eqref{eq:TruncatedSVD} only. 

In summary, by using the objective \eqref{eq:OurMethod:ObjFunUsingTruncSVD} for selecting the $\Vlin$ via the greedy construction and the minimization problem \eqref{eq:OurMethod:LstsqUsingTruncSVD} to find $\Vnonlin$, we rely on the truncated SVD \eqref{eq:TruncatedSVD} of the data matrix $\snapshots$ only. 
It is important to note that when solving \eqref{eq:OurMethod:LstsqUsingTruncSVD} and evaluating \eqref{eq:OurMethod:ObjFunUsingTruncSVD}, the low-rank matrices $\USVD_{\indextrunc} \SigmaSVD_{\indextrunc} \VSVD_{\indextrunc}^\top$ and $\USVD_{\indextrunc\setminus \{j\}} \SigmaSVD_{\indextrunc\setminus \{j\}} \VSVD_{\indextrunc\setminus \{j\}}^\top$, respectively, are not multiplied to avoid having to assemble large matrices and instead the corresponding least-squares problems are solved using the factorized forms. Moreover, in~\cite{SchwerdtnerP2024Greedy}, it is shown that $\hat{J}'$ can be evaluated without computing $\Vnonlin$ leading to lower computational costs.

The following proposition bounds the difference between the minimizer of \eqref{eq:OurMethod:LstsqUsingSVD} and the minimizer when the truncated SVD is used and thus \eqref{eq:OurMethod:LstsqUsingTruncSVD} is solved instead.

\begin{proposition} %
The Frobenius norm of the difference between the minimizers of~\eqref{eq:OurMethod:LstsqUsingSVD} and~\eqref{eq:OurMethod:LstsqUsingTruncSVD} is bounded by $\alpha\big\|{\SigmaSVD^{\indexout\setminus\indextrunc}}\big\|_F$, where
\begin{align*}
\alpha=\frac{\sigma_{\max}(\featuremap(\SigmaSVD_{\indexin}\VSVD^\top_{\indexin})))}{\sigma_{\min}\left(\featuremap(\SigmaSVD_{\indexin}\VSVD^\top_{\indexin}) \featuremap(\SigmaSVD_{\indexin}\VSVD_{\indexin}^\top)^\top + \gamma I\right)}.
\end{align*}\label{prop:BoundProp}
\end{proposition}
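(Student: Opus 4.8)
The plan is to exploit that both \eqref{eq:OurMethod:LstsqUsingSVD} and \eqref{eq:OurMethod:LstsqUsingTruncSVD} are ridge-regularized linear least-squares problems in the unknown weight matrix $\Vnonlin$, sharing the same design factor $\featSin$ and differing only in their constant terms $\USVD_{\indexout}\SigmaSVD_{\indexout}\VSVD_{\indexout}^\top$ and $\USVD_{\indextrunc}\SigmaSVD_{\indextrunc}\VSVD_{\indextrunc}^\top$. First I would write down the closed-form minimizer of each. Setting the gradient of the generic objective $\|\boldsymbol{B} + \Vnonlin \featSin\|_F^2 + \gamma\|\Vnonlin\|_F^2$ to zero yields the normal equations $\Vnonlin(\featSin\featSin^\top + \gamma\identity) = -\boldsymbol{B}\featSin^\top$, and since $\gamma > 0$ the regularized Gram matrix $\featSin\featSin^\top + \gamma\identity$ is symmetric positive definite, hence invertible. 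The unique minimizer is therefore $\Vnonlin^\star = -\boldsymbol{B}\featSin^\top(\featSin\featSin^\top + \gamma\identity)^{-1}$, with $\boldsymbol{B}$ set to the respective constant term.

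Next I would subtract the two minimizers. Because the right factor $\featSin^\top(\featSin\featSin^\top + \gamma\identity)^{-1}$ is common to both, the difference telescopes to
\[
-\big(\USVD_{\indexout}\SigmaSVD_{\indexout}\VSVD_{\indexout}^\top - \USVD_{\indextrunc}\SigmaSVD_{\indextrunc}\VSVD_{\indextrunc}^\top\big)\,\featSin^\top(\featSin\featSin^\top + \gamma\identity)^{-1},
\]
and by \eqref{eq:OGreedyDiff} the bracketed term is exactly the left-out contribution $\USVD_{\indexout\setminus\indextrunc}\SigmaSVD_{\indexout\setminus\indextrunc}\VSVD_{\indexout\setminus\indextrunc}^\top$. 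Taking the Frobenius norm and applying submultiplicativity $\|\boldsymbol{X}\boldsymbol{Y}\|_F \le \|\boldsymbol{X}\|_F\|\boldsymbol{Y}\|_2$ splits the bound into $\|\USVD_{\indexout\setminus\indextrunc}\SigmaSVD_{\indexout\setminus\indextrunc}\VSVD_{\indexout\setminus\indextrunc}^\top\|_F$ times the spectral norm of $\featSin^\top(\featSin\featSin^\top + \gamma\identity)^{-1}$. The first factor equals $\|\SigmaSVD_{\indexout\setminus\indextrunc}\|_F$ because $\USVD_{\indexout\setminus\indextrunc}$ and $\VSVD_{\indexout\setminus\indextrunc}$ have orthonormal columns.

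It remains to bound the spectral norm of $\featSin^\top(\featSin\featSin^\top + \gamma\identity)^{-1}$ by $\alpha$. Substituting the SVD of $\featSin$ diagonalizes this matrix, so its singular values are $\sigma_i/(\sigma_i^2 + \gamma)$, where $\sigma_i$ runs over the singular values of $\featSin$, and its spectral norm is $\max_i \sigma_i/(\sigma_i^2 + \gamma)$. Bounding the numerator by $\sigma_{\max}(\featSin)$ and the denominator from below by $\sigma_{\min}(\featSin)^2 + \gamma = \sigma_{\min}(\featSin\featSin^\top + \gamma\identity)$ gives precisely $\alpha$, completing the chain.

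The part demanding the most care is this last step: the stated $\alpha$ is not the exact operator norm but a convenient upper bound obtained by maximizing the numerator and minimizing the denominator over the singular-value index \emph{separately}, so an inequality rather than an equality enters here. I would also remark that the ``$+\Vnonlin\featSin$'' sign convention in the objectives is immaterial, since replacing $\Vnonlin$ by $-\Vnonlin$ leaves both the feasible set and the attained minimal norm unchanged, so the closed-form minimizer above applies verbatim to \eqref{eq:OurMethod:LstsqUsingSVD} and \eqref{eq:OurMethod:LstsqUsingTruncSVD}.
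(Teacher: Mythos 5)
Your proof is correct and takes essentially the same route as the paper's: both write down the closed-form ridge-regression minimizers, note that their difference telescopes via \eqref{eq:OGreedyDiff} to the discarded term $\USVD_{\indexout\setminus\indextrunc} \SigmaSVD_{\indexout\setminus\indextrunc} \VSVD_{\indexout\setminus\indextrunc}^\top$ multiplied by the common factor involving $\left(\featuremap(\SigmaSVD_{\indexin}\VSVD_{\indexin}^\top) \featuremap(\SigmaSVD_{\indexin}\VSVD_{\indexin}^\top)^\top+\gamma I\right)^{-1}$, and then bound the Frobenius norm using orthonormality of the singular-vector factors and an operator-norm estimate. Your write-up is in fact somewhat more careful than the paper's, which compresses the submultiplicativity and spectral-norm steps into a single displayed inequality and is loose about a sign and transpose in the minimizer formula (both immaterial for the norm bound).
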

\begin{proof} %
    The minimizers $\Vnonlin^{(3.3)}$ and $\Vnonlin^{(3.7)}$ of \eqref{eq:OurMethod:LstsqUsingSVD} and \eqref{eq:OurMethod:LstsqUsingTruncSVD}, respectively are unique for $\gamma > 0$ and given by
    \begin{align*}
    \Vnonlin^{(3.3)} &=\left(\featSin \featSin^\top+\gamma I\right)^{-1} \featSin \VSVD_{\indexout} \SigmaSVD_{\indexout} \USVD_{\indexout}^\top, \\
    \Vnonlin^{(3.7)} &=\left(\featSin \featSin^\top+\gamma I\right)^{-1} \featSin \VSVD_{\indextrunc} \SigmaSVD_{\indextrunc} \USVD_{\indextrunc}^\top.
    \end{align*}
    Therefore, the difference $\mathbf{\Delta}_{\Vnonlin}=\Vnonlin^{(3.3)}-\Vnonlin^{(3.7)}$ is given by
    \begin{align*}
    \mathbf{\Delta}_{\Vnonlin}&=\left(\featSin \featSin^\top+\gamma I\right)^{-1} \featSin\left(\VSVD_{\indexout} \SigmaSVD_{\indexout} \USVD_{\indexout}^\top-\VSVD_{\indextrunc} \SigmaSVD_{\indextrunc} \USVD_{\indextrunc}^\top \right)\\
    &=\left(\featSin \featSin^\top+\gamma I\right)^{-1} \featSin \USVD_{\indexout\setminus\indextrunc} \SigmaSVD_{\indexout\setminus\indextrunc} \VSVD_{\indexout\setminus\indextrunc}^\top,
    \end{align*}
    which has norm
    \begin{equation}\label{eq:BoundTrunc}\begin{aligned}
    {\|\mathbf{\Delta}_{\Vnonlin}\|}_F&={\left\|\left(\featSin \featSin^\top+\gamma I\right)^{-1} \featSin \USVD_{\indexout\setminus\indextrunc} \SigmaSVD_{\indexout\setminus\indextrunc} \VSVD_{\indexout\setminus\indextrunc}^\top\right\|}_F \\
    &\le
    \frac{\sigma_{\max}(h(\SigmaSVD_{\indexin}\VSVD_{\indexin})))}{\sigma_{\min}\left(h(\SigmaSVD_{\indexin}\VSVD_{\indexin}) h(\SigmaSVD_{\indexin}\VSVD_{\indexin})^\top + \gamma I\right)}
    \big\|{\SigmaSVD^{\indexout\setminus\indextrunc}}\big\|_F\,.
    \end{aligned}
    \end{equation}
\end{proof}
 
The bound \eqref{eq:BoundTrunc} given in Proposition~\ref{prop:BoundProp} scales with the sum of the discarded singular values $\SigmaSVD^{\indexout\setminus\indextrunc}$, which are all smaller than any of the singular values that are retained after truncation.   

\subsection{Incremental updates to truncated SVD of the data matrix}\label{sec:Streaming:OnlineTruncSVD}
We now discuss an incremental updated version of the truncated SVD of the data matrix as data chunks $\chunk{0}, \chunk{1}, \chunk{2}, \dots$ are received over the iterations $k = 0, 1, 2, 3, \dots$; see problem formulation in Section~\ref{sec:Prelim:ProbForm}. Each chunk $\chunk{k}$ is of size $\nfull \times \chunkdim$. At iteration $k = 0$, we have available an initial truncated SVD \eqref{eq:TruncatedSVD}, which we denote with $\USVD_{\nconsider}^{(-1)} \in \mathbb{R}^{\nfull \times \nconsider}, \SigmaSVD_{\nconsider}^{(-1)} \in \mathbb{R}^{\nconsider \times \nconsider}, \VSVD_{\nconsider}^{(-1)} \in \mathbb{R}^{0 \times \nconsider}$. %

At iteration $k = 0, 1, 2, 3, \dots$, we update the matrices $\USVD_{\nconsider}^{(k-1)},\SigmaSVD_{\nconsider}^{(k-1)}, \VSVD_{\nconsider}^{(k-1)}$ with a new data chunk $\chunk{k}$ to obtain $\USVD_{\nconsider}^{(k)},\SigmaSVD_{\nconsider}^{(k)}, \VSVD_{\nconsider}^{(k)}$. 
To perform the update, we follow the approach introduced in~\cite{BakerGV2012Low-rank}. 
The first step is to process $\chunk{k}$ by computing the QR decomposition  
\begin{equation}\label{eq:Streaming:QR}
  \bfQ^{(k)}\bfR^{(k)} = [\USVD_{\nconsider}^{(k-1)}\SigmaSVD_{\nconsider}^{(k-1)}, \chunk{k}] \in \mathbb{R}^{\nfull \times (\nconsider+\chunkdim)}\,,
  \end{equation}
  with the factors $\bfQ^{(k)}$ and $\bfR^{(k)}$.  Furthermore, the matrix $\VSVD_{\nconsider}^{(k-1)}$ of right-singular vectors is expanded as
\begin{equation}\label{eq:Streaming:VSVDExpansion}
  \hat{\VSVD}_{\nconsider}^{(k-1)} = \begin{bmatrix}
        \VSVD_{\nconsider}^{(k-1)} & 0 \\
        0 & \bfI_{\chunkdim} \\
      \end{bmatrix}\,,
      \end{equation}
  where $\bfI_{\chunkdim}$ is the $\chunkdim \times \chunkdim$ identity matrix. 
The second step takes the SVD of $\bfR^{(k)}$ to obtain
\begin{equation}\label{eq:Streaming:RSVD}
  \bfR^{(k)} = \GU \GSigma{\GV}^\top\,.
  \end{equation}
The matrix of left- and right-singular vectors of the SVD \eqref{eq:Streaming:RSVD} act as transformations to compute
\begin{equation}\label{eq:Streaming:UpdateStepUV}
 \USVD_{\nconsider}^{(k)} = [\bfQ^{(k)}\GU]_{1:\nconsider}\,,\qquad
 \VSVD_{\nconsider}^{(k)} = [\hat \VSVD_{\nconsider}^{(k-1)}\GV]_{1:\nconsider}\,,
 \end{equation}
 where $[\,]_{1:\nconsider}$ means that the first $\nconsider$ columns are used only. 
 The diagonal matrix of the updated singular values $\SigmaSVD_{\nconsider}^{(k)}$ is set to the diagonal matrix that has as diagonal elements the first $\nconsider$ diagonal elements of $\GSigma$.

 The incremental updating of the truncated SVD introduces errors. Roughly speaking, the error incurred by the incremental updating of the SVD is proportional to the smallest positive singular value of the truncated SVD. Thus, choosing $\nconsider$ large enough can keep the error of the incremental update under control; see  \cite{ChahlaouiGV-D2003Recursive, Baker2004Block,BakerGV2012Low-rank} for a priori and a posteriori error bounds. 

We now discuss the computational costs and storage requirements of the steps for updating the matrices $\USVD_{\nconsider}^{(k-1)} \in \mathbb{R}^{\nfull \times \nconsider},\SigmaSVD_{\nconsider}^{(k-1)} \in \mathbb{R}^{\nconsider \times \nconsider}, \VSVD_{\nconsider}^{(k-1)} \in \mathbb{R}^{k\chunkdim \times q}$  
to $\USVD_{\nconsider}^{(k)}\in \mathbb{R}^{\nfull \times \nconsider},\SigmaSVD_{\nconsider}^{(k)} \in \mathbb{R}^{\nconsider \times \nconsider}, \VSVD_{\nconsider}^{(k)} \in \mathbb{R}^{(k + 1)\chunkdim \times q}$ for a chunk $\chunk{k}$. The costs of computing the QR decomposition  \eqref{eq:Streaming:QR} scale as $\mathcal{O}(\nfull (\nconsider + \chunkdim)^2)$. The SVD of $\bfR^{(k)}$ computed in \eqref{eq:Streaming:RSVD} has costs that scale as $\mathcal{O}((\nconsider+\chunkdim)^3)$.
At iteration $k$, the costs of the matrix multiplications in \eqref{eq:Streaming:UpdateStepUV} %
scale as $\mathcal{O}(\nfull\nconsider^2)$ and $\mathcal{O}(k\chunkdim\nconsider^2)$, respectively. %
Thus, at iteration $k$, the costs scale linearly in the dimension $\nfull$ of the data points and linearly in $k\chunkdim$ but we avoid costs that scale as $\mathcal{O}(\max(\nfull,k\chunkdim) \times \min(\nfull, k\chunkdim)^2)$. The storage requirement is dominated by the matrices $\USVD_{\nconsider}^{(k)}$ and $\VSVD_{\nconsider}^{(k)}$, since the full matrix $\snapshots$ is never used and the chunks $\chunk{k}$ are not accumulated in memory. Therefore, at iteration $k$, the storage requirement only scales linearly in $\nfull$ and the number of data samples $(k+1)\chunkdim$ received up to iteration $k$, that is our storage requirement scales as $\mathcal{O}(k \chunkdim \maxrank + \nfull \maxrank)$. This is in stark contrast to  storing the data matrix at iteration $k$, which scales as $\mathcal{O}(\nfull k\chunkdim)$, whereas our approach has storage requirements that avoid the multiplicative scaling of dimension $\nfull$ and number of data points $(k+1) \chunkdim$ received up to iteration $k$  and instead achieves an additive scaling as $((k+1) \chunkdim + \nfull)  \maxrank$.

\subsection{Online greedy construction of quadratic manifolds from streaming data}
Building on incremental updates to a truncated SVD of the data matrix allow us to derive Algorithm~\ref{alg:OurMethod:streaming_qm} for constructing quadratic manifolds from streaming data. The algorithm takes as input a data chunk $\chunk{k}$, the current SVD $\USVD^{(k-1)}_\nconsider, \SigmaSVD^{(k-1)}_\nconsider, \VSVD^{(k-1)}_\nconsider$, the truncation dimension $\maxrank$, the latent dimension $\nred$, as well as the regularization parameter $\gamma$, and the feature map $\featuremap$.
The algorithm processes the incoming data chunk as follows. As the new data chunk $\chunk{k}$ is received, first the SVD is updated (lines 4-7) and after that the greedy construction of a quadratic manifold using the updated SVD is performed. Note that the quadratic manifold does not have to be updated at each step. In our numerical experiments, we only update the SVD as we process the data chunks and compute the quadratic manifold only once after all chunks have been processed.

\begin{algorithm}[t]
  \caption{Online greedy construction of quadratic manifolds from streaming data}
  \label{alg:OurMethod:streaming_qm}
  \begin{algorithmic}[1]
    \Procedure{StreamingQM}{$\chunk{k}, \USVD^{(k-1)}_\nconsider, \SigmaSVD^{(k-1)}_\nconsider, \VSVD^{(k-1)}_\nconsider, \maxrank, \nred, \gamma, \featuremap$}
    \State{\textit{// Update SVD}}
    \State{$\bfQ^{(k)}, \bfR^{(k)}, \hat{\VSVD}_{\nconsider}^{(k-1)}$ using \eqref{eq:Streaming:QR} and \eqref{eq:Streaming:VSVDExpansion}.}
    \State{Compute $\GU$ and $\GV$ as left and right singular vectors of $\bfR^{(k)}$.}
    \State{Set $\USVD_\nconsider^{(k)}=\bfQ^{(k)} \GU$, $\VSVD_\nconsider^{(k)}=\hat \VSVD_\nconsider^{(k-1)} \GV$, and $\SigmaSVD_\nconsider^{(k)}={\GU}^{\top} \bfR^{(k)}\GV$.}
    \State{Truncate $\USVD_\nconsider^{(k)}$, $\VSVD_\nconsider^{(k)}$, and $\SigmaSVD_\nconsider^{(k)}$ to keep a rank $\maxrank$ approximation.}
    \State{\textit{// Optional: update quadratic manifold}}
    \State{Set $\mathcal{I}_0=\{\}, \mathcal{\breve I}_0=\{1, \dots, \maxrank\}, \Vlin_0 = []$}
    \For{$i=1,\dots, \nred$}
    \State{Compute $\USVDi{j_i}^{(k)}$ that minimizes~\eqref{eq:OurMethod:ObjFunUsingTruncSVD} over all $\USVDi{j^{1}}^{(k)}, \dots, \USVDi{j^\maxrank}^{(k)}$ and $\Vnonlin \in \mathbb{R}^{\nfull \times \nredmod}$}
    \State{Set $\mathcal{I}_i=\{j_1, \dots, j_i\}$ and $\mathcal{\breve I}_i=\{1, \dots, \nsnapshots\}\setminus \mathcal{I}_i$}
    \State{Set $\Vlin_i = [\USVDi{1}^{(k)}, \dots, \USVDi{j_k}^{(k)}]$}
    \EndFor
    \State{Set $\Vlin^{(k)}=[\USVDi{j_1}^{(k)}, \dots, \USVDi{j_r}^{(k)}]$} %
    \State{Compute $\Vnonlin^{(k)}$ via the regularized least-squares problem \eqref{eq:OurMethod:LstsqUsingTruncSVD}.}
    \EndProcedure
  \end{algorithmic}
\end{algorithm}

In Algorithm~\ref{alg:OurMethod:streaming_qm}, we optimize for $\USVDi{j_i}^{(k)}$ that minimizes~\eqref{eq:OurMethod:ObjFunUsingTruncSVD} over all $\USVDi{j^{1}}^{(k)}, \dots, \USVDi{j^\maxrank}^{(k)}$, that is, over all computed left singular vectors. In~\cite{SchwerdtnerP2024Greedy}, we limit the search to only a small subset of the singular vectors, to trade off runtime and accuracy. Since in the streaming variant, the set of computed singular vectors  $\nconsider$ is already much smaller than $\nfull$ and $\nsnapshots$, we search all over all $\nconsider$ computed left singular vectors.

\section{Numerical experiments}
\label{sec:numerical_experiments}
We demonstrate the online greedy method with three numerical experiments, ranging from Hamiltonian wave problems to the Kelvin-Helmholtz instability. In the last example, we train a quadratic manifold on more than one Petabyte of data. We run the numerical experiments on Xeon Sapphire Rapids 52-core processors with 256GB of main memory. The simulation and data-processing of our last example is carried out on an NVIDIA
H-100 GPU with 80GB of memory. A python implementation using jax~\cite{BradburyFHJLMNPVW2018JAX} is available at \url{github.com/Algopaul/sqm_demo_gpu}.  %

\subsection{Hamiltonian interacting pulse}
\label{sec:NumExp:Hamiltonian}
\begin{figure}
\begin{tabular}{cc}
\resizebox{0.40\textwidth}{!}{\input{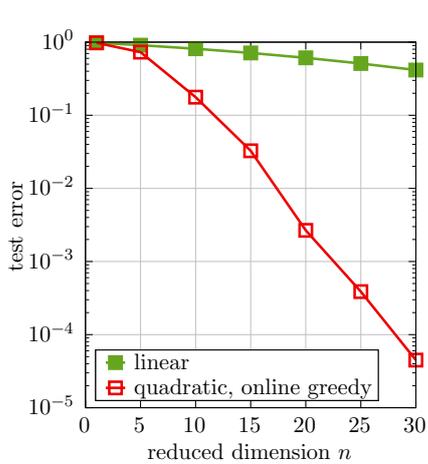}}
& ~~~~~\resizebox{0.48\textwidth}{!}{\input{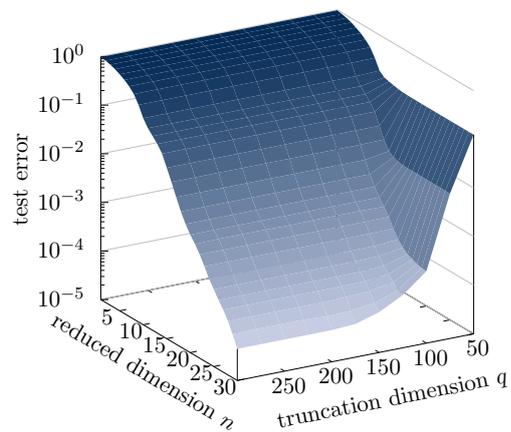}}\\
\scriptsize (a) test error & \scriptsize (b) reduced versus truncation dimension
\end{tabular}
\caption{Hamiltonian wave: (a) The proposed online greedy method constructs a quadratic manifold from streaming data that achieves orders of magnitude low approximation errors \eqref{eq:NumExp:RelError} on test data than  linear dimensionality reduction with the SVD. (b) As the reduced dimension $\nred$ of the quadratic manifold is increased, the truncation dimension $\nconsider$ has to be increased too so that the online greedy method can utilize higher-order left-singular vectors of the data matrix.}
\label{fig:NumExp:Hamiltonian:Error}
\end{figure}

\begin{figure}
    \centering
    \resizebox{1.0\textwidth}{!}{\input{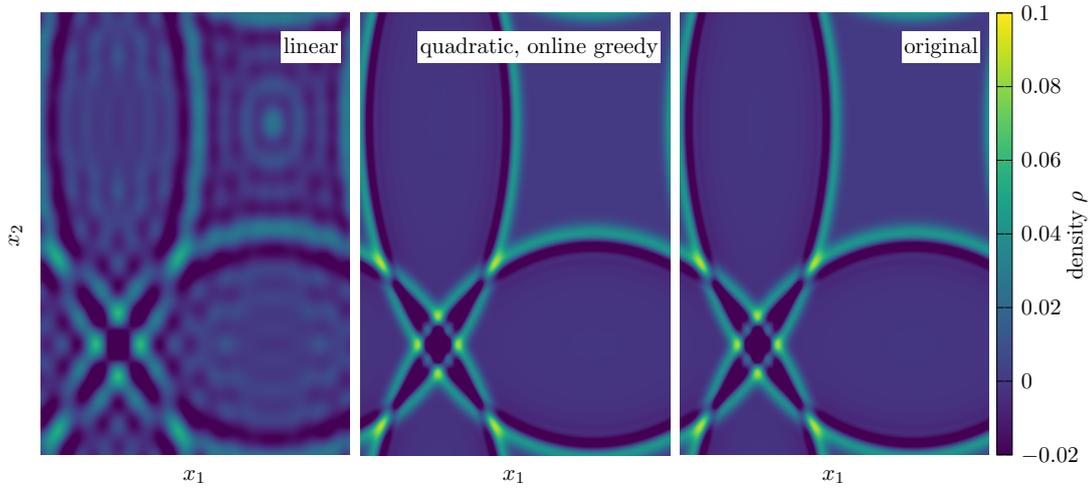}}
    \caption{Hamiltonian wave: The online greedy method constructs quadratic manifolds from streaming data that achieve visibly better approximations of unseen test data than linear dimensionality reduction. Approximations shown for reduced dimension $\nred=20$ and time $t=6$.}
    \label{fig:hamilflow:reconstruction}
\end{figure}

\begin{figure}
    \centering
    \resizebox{1.0\textwidth}{!}{\input{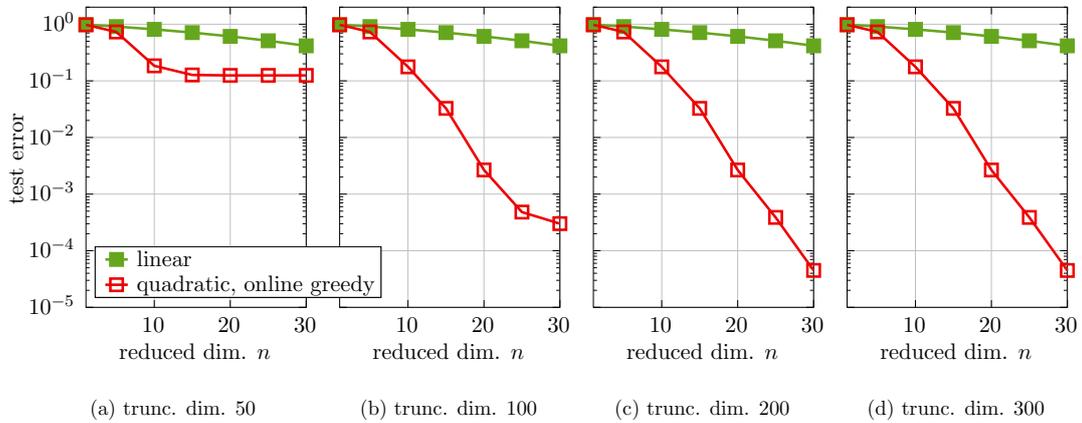}}
    \caption{Hamiltonian wave: The plots show that the truncation dimension $\nconsider$ and the reduced dimension $\nred$ have to increase in tandem so that the online greedy method can leverage higher-order left-singular vector for larger $\nred$.}
    \label{fig:hamilflow:test_errs}
\end{figure}
We consider data samples that are numerical solutions of the acoustic wave equation in Hamiltonian form,
  \begin{align}
\label{eq:hamiltonian_wave_pde}
      \partial_t \rho(t, \bfx) &= -\nabla v(t, x), \\
      \partial_t v(t, \bfx) &= -\nabla \rho(t, \bfx),\\
      \rho(0, \bfx) &= \rho_0(\bfx; \mu), \\
      v(0, \bfx) &= 0,
  \end{align}
where $\rho: [0, T] \times \Omega \to \mathbb{R}$ denotes the density and $v: [0, T] \times \Omega \to \mathbb{R}^2$ the velocity field.
The time interval is $[0, T]$ with $T = 8$ and the spatial domain is $\Omega = [-4, 4)^2$. We impose periodic boundary conditions. The initial condition $\rho_0$ depends on a parameter $\mu \in [0, 1]$,
\begin{align}
    \rho_0(\bfx, \mu) = \exp\left(-(\mu+6)^2 \left((x_1-2)^2+(x_2-2)^2\right)\right),
  \end{align}
  where $\bfx = [x_1, x_2]^T \in \Omega$. 
We employ a central second order finite difference scheme with 600 grid points in each spatial direction, resulting in a state-space dimension of $\nfull = 1,080,000$. Time is discretized with a fourth-order Runge-Kutta method with time-step size $\delta t = 5 \times 10^{-3}$. A data point $\fullstate_i$ corresponds to the solution fields $\rho$ and $v$ evaluated at the $\nfull$ grid points at one of the $T / \delta t = 1601$ time points that are equidistantly distributed in $[0, T]$.

We generate 101 trajectories corresponding to the initial conditions with parameters $\mu \in \{0, 1/100, 2/100, \dots, 1\}$.
We divide the 101 trajectories into 99 training trajectories that lead to the data points in $\snapshots$, one validation trajectory corresponding to $\mu = 1/4$ that we collect in $\snapshots^{\text{valid}}$ and one test trajectory corresponding to $\mu = 3/4$ that we store in $\snapshots^{\text{test}}$.
The size of the training data $\snapshots$ is approximately 1.37\,TB. 

We process the training data with the online greedy method over the iterations $k = 0, 1, 2, 3, \dots$,  where we receive chunks $\chunk{0}, \chunk{1}, \chunk{2}, \chunk{3}, \dots $ with size $\chunkdim = 347$. %
We learn quadratic manifolds of dimensions $\nred \in \{1, 5, 10, 15, \dots, 30\}$. %
For now, we set the truncation dimension to $\nconsider = 300$ because the largest considered reduced dimension $\nred = 30$ is multiplied by a factor 10, which is the convention also used in the work \cite{SchwerdtnerP2024Greedy}; we will study the effect of $\nconsider$ in more detail below.
The test error that we report is the relative error of approximating test trajectories
\begin{equation}\label{eq:NumExp:RelError}
  e(\snapshots^{\text{test}}) = \frac{\|\hat{\snapshots} - \snapshots^{\text{test}}\|_F^2}{\|\snapshots^{\text{test}}\|_F^2}\,,
  \end{equation}
  where $\snapshots^{\text{test}} = [\fullstatei{1}^{\text{test}}, \fullstatei{2}^{\text{test}}, \dots]$ contains the test trajectory and $\hat{\snapshots} = g(f(\snapshots^{\text{test}}))$ is the test trajectory approximated with the encoder function $f$ and the decoder function $g$.  
The validation trajectory $\snapshots^{\text{valid}}$ is used to select the regularization parameter $\gamma$ over a sweep $10^{-8}, 10^{-6}, \dots,10^{0}$ that leads to the lowest relative error that is analogously defined to \eqref{eq:NumExp:RelError} but for the validation data $\snapshots^{\text{val}}$. Based on this procedure, we select $\gamma = 10^{-8}$ in this example. 

Figure~\ref{fig:NumExp:Hamiltonian:Error}a shows the relative error \eqref{eq:NumExp:RelError} of approximating the test trajectory. We compare the quadratic manifold to linear dimensionality reduction that uses the encoder and decoder function given by the SVD truncated at $\nred$ as discussed in Section~\ref{sec:Prelim:EnDeLinear}. Note that the linear encoder and decoder also use the incremental SVD as described in Section~\ref{sec:Streaming:OnlineTruncSVD}. %
The results in Figure~\ref{fig:NumExp:Hamiltonian:Error}a show that the quadratic manifold achieves orders of magnitude lower test errors \eqref{eq:NumExp:RelError} than linear dimensionality reduction. The results are in agreement with the plots shown in Figure~\ref{fig:hamilflow:reconstruction}, where the linear dimensionality reduction leads to visible artifacts, while the quadratic-manifold approximation is visually indistinguishable from the original density field.
  
We now investigate how the truncation dimension $\nconsider$ of the incremental SVD  influences the error of the online greedy method. Figure~\ref{fig:NumExp:Hamiltonian:Error}b shows the test error \eqref{eq:NumExp:RelError} over reduced dimensions $\nred \in \{1, 2, \dots, 30\}$ and truncation dimensions $\nconsider \in \{50, 100, \dots, 300\}$. A large reduced dimension $\nred$ combined with a low truncation dimension $\nconsider$ leads to a large error, which decreases as $\nconsider$ is increased. This indicates that the truncation dimension $\nconsider$ and the reduced dimension $\nred$ have to increase in tandem so that the online greedy method can leverage higher-order left-singular vectors. 
Figure~\ref{fig:hamilflow:test_errs} shows slices of Figure~\ref{fig:NumExp:Hamiltonian:Error}b that further emphasize that $\nconsider$ and $\nred$ depend on each other.
The reason that a low truncation dimension $\nconsider$ leads to larger errors is twofold. First, by truncating too early, the truncated terms  \eqref{eq:OGreedyDiff} still contain information that are important for fitting the quadratic manifold. 
Second, the truncated incremental SVD underlying the online greedy method incurs errors over the online iterations $k = 0, 1, 2, 3, \dots$; see discussion in Section~\ref{sec:Streaming:OnlineTruncSVD}. The errors are typically of the order of the first singular value that is truncated, which means that truncating too early leads to quick error accumulation over the iterations. %
Figure~\ref{fig:hamilflow:singvals} shows the approximations of singular values and their errors obtained with the incremental truncated SVD for $\nconsider \in \{50, 100, 200, 300\}$. 
The absolute errors of the singular values are computed with respect to the  singular value obtained with $\nconsider = 300$. The leading singular values are computed to a high accuracy while the error increases for later singular values. Furthermore,  the results show that the error in the approximations of the singular values depends on the truncation dimension $\nconsider$, which is in agreement with the discussion above.

\begin{figure}
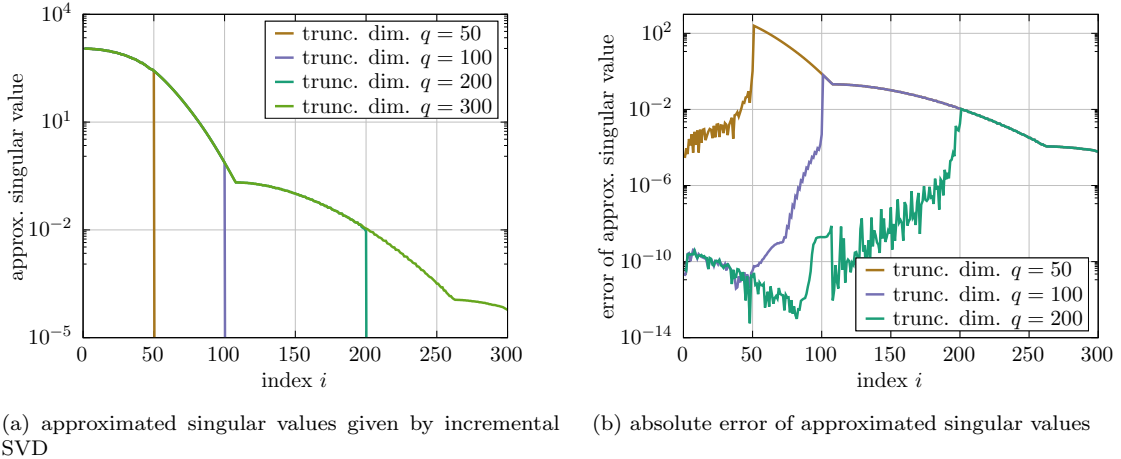

  \centering
  \begin{tabular}{p{0.5\textwidth} p{0.5\textwidth}}
    \resizebox{0.48\textwidth}{!}{\input{PlotSources/hamilflow/hamilflow_singvals.tex}} &
    \resizebox{0.48\textwidth}{!}{\input{PlotSources/hamilflow/hamilflow_singvals_errs.tex}} \\
    \scriptsize (a) approximated singular values given by incremental SVD &
    \scriptsize (b) absolute error of approximated singular values \\
  \end{tabular}
  \caption{Hamiltonian wave: Because the leading singular values are approximated well by the incremental SVD, it is sufficient to truncate the SVD early in the online greedy method. Increasing the truncation dimension $\nconsider$ reduces the error of the approximated singular values.}
  \label{fig:hamilflow:singvals}
\end{figure}

\subsection{Turbulent channel flow at a high Reynolds numbers}
We now consider data samples that represent three-dimensional velocity fields of a turbulent channel flow, which are computed using AMR-Wind \cite{BrazellAVCSEH2021AMR-Wind}.
We use a wall-modeled large eddy simulation at a friction Reynolds number 5200, discretized with a staggered finite volume method into a grid of $384 \times 192 \times 32$ cells. The dimension of the data points is $\nfull = 3 \times 384 \times 192 \times 32 = 7,077,888$.
We have a total of $\nsnapshots = 10000$ data points, which we uniform randomly split into 9000 training data samples in $\snapshots$, 500 validation data samples in $\snapshots^{\text{valid}}$, and 500 test samples in $\snapshots^{\text{test}}$.
The training data $\snapshots$ is of size of  about 510 GB. We receive data samples via chunks of size $\chunkdim = 52$. %
The setup of computing the quadratic manifold is the same as in Section~\ref{sec:NumExp:Hamiltonian}.

Figure~\ref{fig:channelflow:test_errs} shows the relative error \eqref{eq:NumExp:RelError} on the test data for the quadratic manifold with dimension $\nred \in \{1, 5, 10, 15, 20, 25, 30\}$ over the truncation dimension $\nconsider \in \{50, 100, 200, 300, 500\}$. Compared to the linear dimensionality with encoder and decoder corresponding to the SVD, the quadratic manifold achieves a lower test error \eqref{eq:NumExp:RelError}. The difference between the error obtained with the quadratic manifold and linear dimensionality reduction is lower for this example than for the Hamiltonian wave example from Section~\ref{sec:NumExp:Hamiltonian}. However, when comparing the velocity fields in Figure~\ref{fig:channelflow:reconstruction}, it can be  seen that the quadratic manifold approximates some of the high-frequency features of the velocity field with higher accuracy than linear dimensionality reduction. 
In Figure~\ref{fig:channelflow:singvals}, we study the error in the approximation of the singular values of the online greedy method. Analogously to the results presented for the Hamiltonian wave problem in Section~\ref{sec:NumExp:Hamiltonian}, the approximation accuracy of the singular values increases with the truncation dimension $\nconsider$.

\begin{figure}
    \centering
    \resizebox{1.0\textwidth}{!}{\input{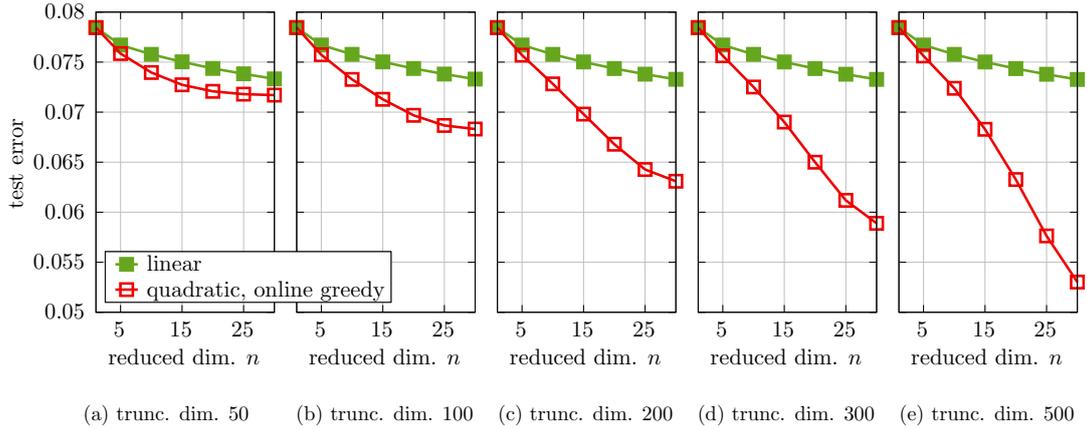}}
    \caption{Channel flow: As the truncation dimension $\nconsider$ is increased, the online greedy method constructs quadratic manifolds from streaming data that achieve about 40\% lower test errors \eqref{eq:NumExp:RelError} than linear dimensionality reduction with the SVD.}
\label{fig:channelflow:test_errs}
\end{figure}

\begin{figure}
    \centering
    \resizebox{1.0\textwidth}{!}{\input{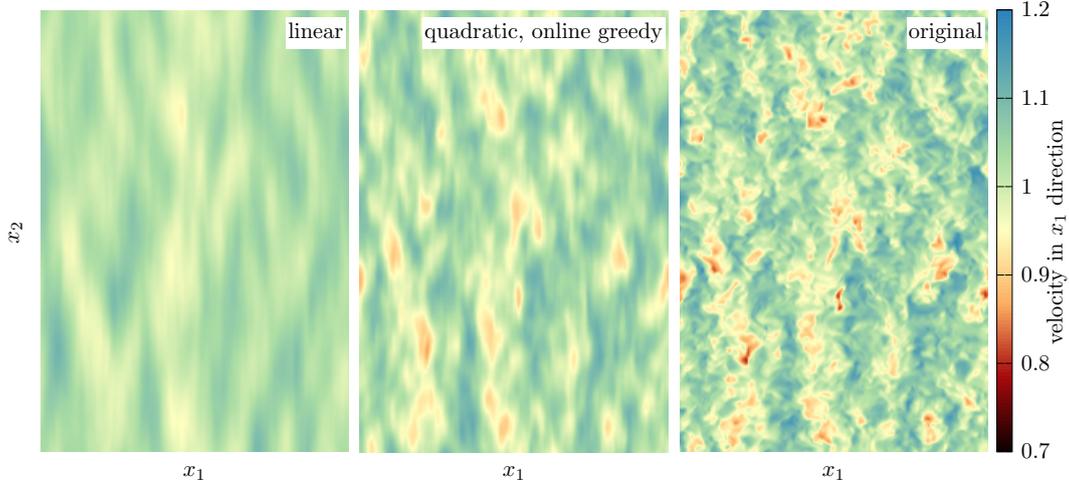}}
    \caption{Channel flow: The online greedy method learns quadratic manifolds that capture fine-scale structure of the turbulent field that are missed by linear dimensionality reduction.}%
    \label{fig:channelflow:reconstruction}
\end{figure}

\begin{figure}
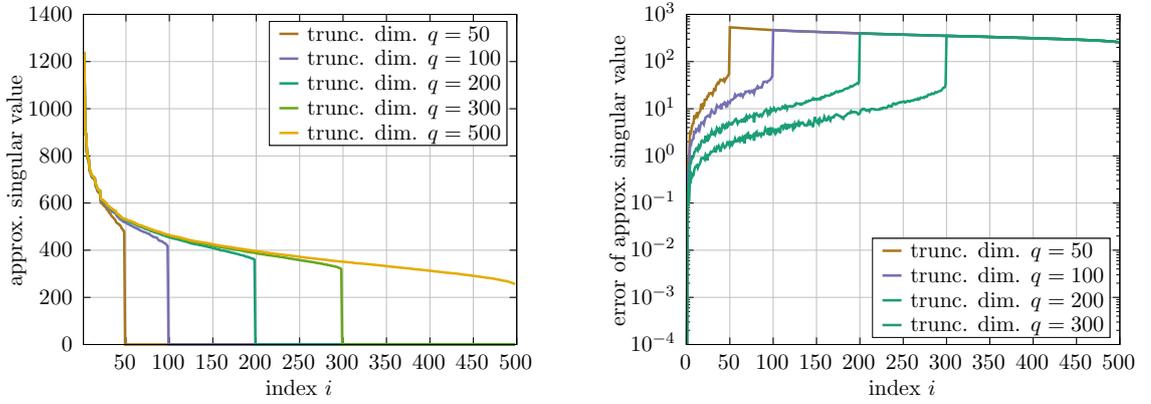

    \centering
    \begin{tabular}{cc}
    \resizebox{0.49\textwidth}{!}{\input{PlotSources/channelflow/channelflow_singvals.tex}}&
    \resizebox{0.49\textwidth}{!}{\input{PlotSources/channelflow/channelflow_singvals_errs.tex}}\\
    \scriptsize (a) approximated singular values given by incremental SVD &
    \scriptsize (b) absolute error of approximated singular values \\
    \end{tabular}
    \caption{Channel flow: As the truncation dimension $\nconsider$ is increased, the approximation accuracy of the singular values increases, which means that the online greedy method can leverage higher-order left-singular vectors for constructing more accurate quadratic manifolds; see also Figures~\ref{fig:channelflow:test_errs}--\ref{fig:channelflow:reconstruction}.}
    \label{fig:channelflow:singvals}
\end{figure}

\subsection{Petabyte-scale data of Kelvin-Helmholtz instability}

\begin{figure}
    \centering
    \resizebox{1.0\textwidth}{!}{\input{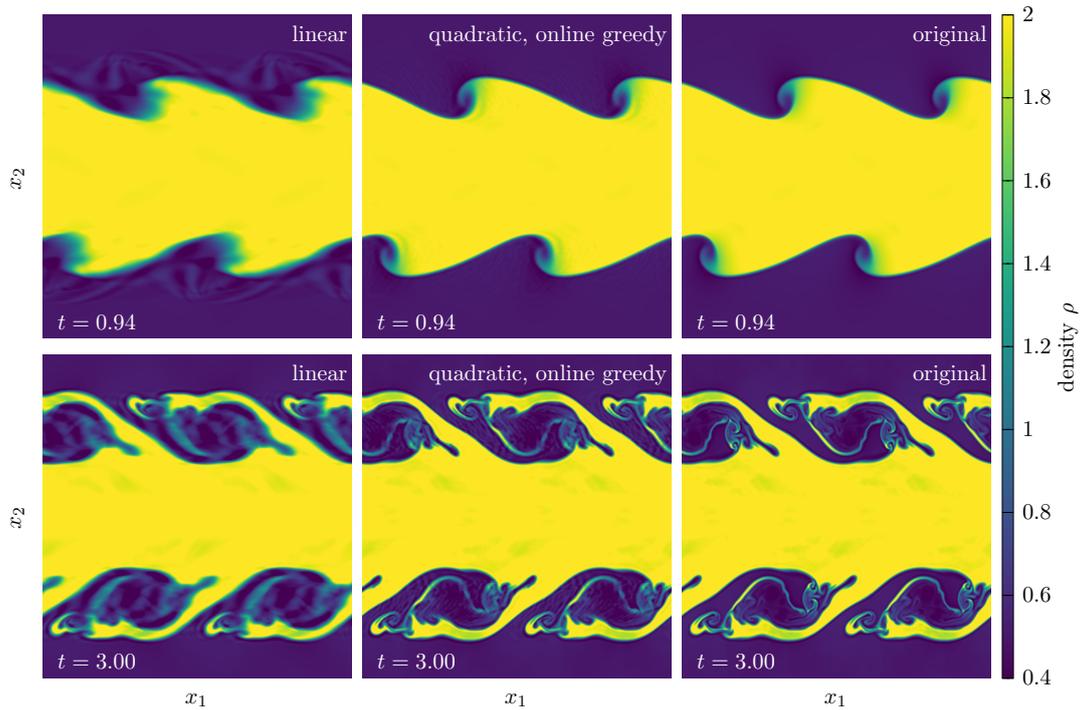}}
    \caption{Kelvin-Helmholtz instability: The online greedy method processes more than one Petabyte of streaming data for constructing a quadratic manifold of the Kelvin-Helmholtz instability in this example. The online greedy method is run alongside the numerical simulation that produces the data in an in-situ fashion so that only small data chunks have to be kept in the main memory and costly disk input/output operations are avoided.} %
    \label{fig:khi:reconstruction}
\end{figure}

\begin{figure}
    \centering
    \resizebox{1.0\textwidth}{!}{\input{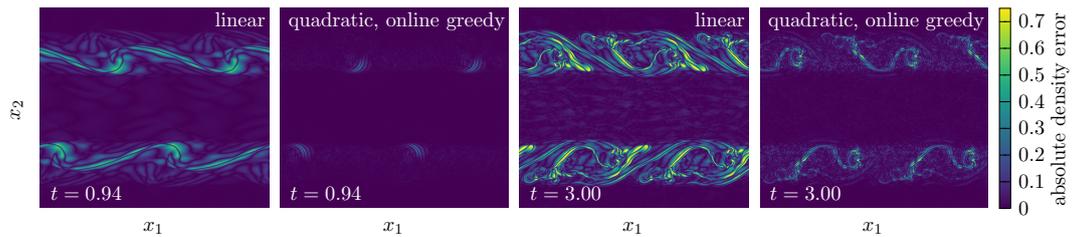}}
    \caption{Kelvin-Helmholtz instability: The quadratic manifold obtained from more than one Petabyte of streaming data achieves lower errors than linear dimensionality reduction, which is visible in the fine-scale structure as the instability emerges. Results are shown for a quadratic manifold of dimension $\nred = 20$.}
    \label{fig:khi:reconstruction_error}
\end{figure}

\begin{figure}
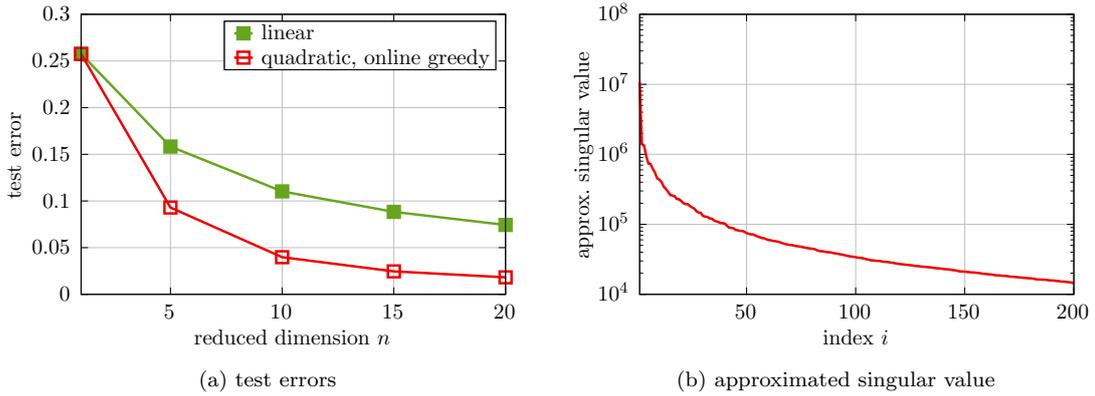

    \centering
    \begin{tabular}{cc}
    \resizebox{0.48\textwidth}{!}{\input{PlotSources/khi/khi_test_errs.tex}} &
    \resizebox{0.48\textwidth}{!}{\input{PlotSources/khi/khi_singvals.tex}} \\
    \scriptsize (a) test errors &
    \scriptsize (b) approximated singular value \\
    \end{tabular}
    \caption{Kelvin-Helmholtz instability: The plots report the error \eqref{eq:NumExp:RelError} of approximating test trajectories on the quadratic manifold learn from the Kelvin-Helmholtz instability. While the test error \eqref{eq:NumExp:RelError} corresponding to the quadratic manifold is only slightly lower than the error corresponding to linear dimensionality reduction, fine-scale features are more accurately captures by the quadratic manifold as can be seen in Figure~\ref{fig:khi:reconstruction_error}.} 
    \label{fig:KHInstabilityTestError}
\end{figure}

We now use the online greedy method as an in-situ data analysis tool to construct quadratic manifolds from streaming data as they are generated by a numerical simulation. This avoids having to store the data on disk because they are processed as they are generated.

Let us consider the Euler equations for gas dynamics in a two-dimensional domain $\Omega = [-1,1)^2$ with periodic boundary conditions, 
\newcommand{\spt}{}
    \begin{align}
    \frac{\partial}{\partial t}
    \begin{bmatrix}
    \rho\spt \\
    \rho\spt v_{x_1}\spt \\
    \rho\spt v_{x_2}\spt \\
    E\spt
    \end{bmatrix}
    +
    \frac{\partial}{\partial x_1}
    \begin{bmatrix}
    \rho\spt v_{x_1}\spt \\
    \rho\spt v_{x_1}\spt^2 + p\spt \\
    \rho\spt v_{x_1}\spt v_{x_2}\spt \\
    v_{x_1}\spt(E\spt+p\spt) \\
    \end{bmatrix}
    +
    \frac{\partial}{\partial x_2}
    \begin{bmatrix}
    \rho\spt v_{x_2}\spt \\
    \rho\spt v_{x_1}\spt v_{x_2}\spt \\
    \rho\spt v_{x_2}\spt^2 + p\spt \\
    v_{x_2}\spt(E\spt+p\spt) \\
    \end{bmatrix}
    =0,
    \end{align}
where $\rho: [0, T] \times \Omega \to \mathbb{R}$, $v_{x_1}: [0, T] \times \Omega \to \mathbb{R}$, $v_{x_2}: [0, T] \times \Omega \to \mathbb{R}$, $E: [0, T] \times \Omega \to \mathbb{R}$, and $p: [0, T] \times \Omega \to \mathbb{R}$ denote the density, velocities in $x_1$ and $x_2$ direction, energy, and pressure, respectively. The equations are closed by the ideal gas law
\begin{equation}\renewcommand{\spt}{(t,\bfx)}
    p\spt=(\Gamma-1)\left(E\spt-\frac{1}{2}\rho\spt (v_{x_1}\spt^2+v_{x_2}\spt^2)\right)\,,\quad \forall \bfx \in \Omega, t \in [0, T]\,,
\end{equation}
with the gas-constant $\Gamma=1.4$. To trigger the Kelvin-Helmholtz instability, we initialize the system based on the setup in~\cite{Rueda-RamirezRG2021Subcell} via
    \begin{align}
    \rho(0, x_1, x_2) &= 0.5 + b(x_1,x_2), \\
    v_{x_1}(0, x_1, x_2) &= \alpha (b(x_1,x_2)-1) + 0.5, \\
    v_{x_2}(0, x_1, x_2) &= 0.1\sin(2\pi x_1), \\
    b(x_1, x_2) &= 0.75(\tanh(\beta (x_2 + 0.5))-\tanh(\beta (x_2 - 0.5)),
    \end{align}
    with a fixed $\beta=80$ and we set the initial pressure to $1.0$ in the entire spatial domain.
We vary $\alpha$ to cover different initial velocity profiles, which lead to changes in the resulting vortex shape. 
The equations of motion are discretized on a grid of size $1024 \times 1024$ using a Harten-Lax-van Leer  flux as described in~\cite[Chapter 6.2.3] {Hesthaven2018Numerical}. We simulate the system until the final time $T=3$. %

Our goal is learning a quadratic manifold from the data generated by 700 simulation runs that correspond to varying the parameter $\alpha$ equidistantly in the interval $[0.495, 0.505]$. One data point consists of the vectorized density, velocities in $x_1$ and $x_2$ directions, and the energy, such that the full dimension is $\nfull=4\times 1024^2 = 4,194,304$.
The 700 simulation runs correspond to  $\nsnapshots = 3.19\times 10^7$ data points. Storing all data points for a post-processing data analysis steps would require 1.07 Petabytes of disk storage.
Instead, we use our online greedy method and run it alongside the simulation in an in-situ fashion. %
In this way, we only need to store the current snapshot chunk together with the truncated SVD. %
In our experiment, the truncation dimension is $\maxrank=200$, which reduces the necessary storage from 1.07PB to roughly 54GB. This fits into the main memory of the compute nodes that we use.
The simulation and data-processing is executed on an NVIDIA H-100 GPU with 80GB of memory; for implementation details we refer to our solver at \url{https://github.com/Algopaul/sqm_demo_gpu}.
In Figure~\ref{fig:khi:reconstruction}, we show the reconstruction of the test-trajectory (where $\alpha=0.5$) for a reduced dimension $\nred=20$. The corresponding point-wise absolute errors are shown in Figure~\ref{fig:khi:reconstruction_error} and the test error is shown in Figure~\ref{fig:KHInstabilityTestError}. As the Kelvin-Helmholtz instability is emerging, the approximation obtained with linear dimensionality reduction exhibits substantial artifacts while the online greedy method provides an approximation that can even resolve the finer features of the underlying flow field. We stress that in this example our online greedy method processes more than one Petabyte of training data and that it is unnecessary to store the data as the greedy method is executed in an in-situ fashion alongside the numerical solver.

\section{Conclusion}\label{sec:Conc}
This work underscores the necessity of in-situ data processing for handling the ever-increasing volumes of data generated by complex numerical simulations. Traditional approaches, which rely on storing large datasets on disk for post-processing, are becoming unsustainable due to the limited main memory as well as slow disk input/output operations. The online greedy method introduced in this work constructs accurate quadratic manifold embeddings directly from streaming data. The data points are processed as they are generated, which allows using the online greedy method as an in-situ data analysis tool. The method's scalability is demonstrated by handling Petabyte-scale data. Furthermore, the method builds on standard linear algebra routines, ensuring compatibility and efficiency across various computing architectures.

\section*{Acknowledgement}
This material is based upon work supported by the U.S.~Department of Energy, Office of Science Energy Earthshot Initiative as part of the project "Learning reduced models under extreme data conditions for design and rapid decision-making in complex systems" under Award \#DE-SC0024721, and the U.S.~Department of Energy, Office of Scientific Computing Research, Award \#DE-SC0019334. Furthermore, this work was authored by the National Renewable Energy Laboratory, operated by Alliance for Sustainable Energy, LLC, for the U.S. Department of Energy (DOE) under Contract No. DE-AC36-08GO28308. Funding provided by Department of Energy Office of Science Advanced Scientific Computing Research Program under Work Authorization No. KJ/AL16/24.

\vskip2pc

\bibliographystyle{myplain}

\bibliography{donotchange,references}

\end{document}